\documentclass{article}
\pagestyle{plain}
\usepackage[utf8]{inputenc}
\usepackage[margin=1 in]{geometry}
\usepackage{amsmath}
\usepackage{amsfonts}
\usepackage{amssymb}
\usepackage{amsthm}
\usepackage{mathrsfs}
\usepackage{tikz-cd}
\usepackage{enumitem}
\usepackage{lmodern}
\usepackage{hyperref}
\usepackage[vcentermath,enableskew]{youngtab}
\usepackage{ytableau}

\usepackage{graphbox}

\newtheorem{theorem}{Theorem}
\numberwithin{theorem}{section}
\newtheorem{lemma}[theorem]{Lemma}
\newtheorem{prop}[theorem]{Proposition}
\newtheorem{corollary}[theorem]{Corollary}

\makeatletter
\let\c@theorem\c@figure
\makeatother

\newtheorem*{prop*}{Proposition}

\theoremstyle{definition}

\newtheorem{example}[theorem]{Example}

\newcommand{\tb}{\textbf}

\newcommand{\Z}{\mathbb{Z}}

\newcommand{\tn}{\textnormal}
\newcommand{\se}{\subseteq}

\newcommand{\til}{\widetilde}

\newcommand{\bs}{\backslash}

\newcommand{\ol}{\overline}
\newcommand{\lam}{\lambda}
\newcommand{\m}{\ol{\til{m}}}

\newcommand{\ims}{\includegraphics[width=0.8cm,align=c]} 
\newcommand{\imxs}{\includegraphics[width=0.6cm,align=c]} 

\newcommand{\tcb}{\textcolor{blue}}

\setcounter{MaxMatrixCols}{30}

\usepackage[maxbibnames=10,style=alphabetic]{biblatex}
\addbibresource{references.bib}

\title{A power sum expansion for the Kromatic symmetric function}
\author{Laura Pierson \\ University of Waterloo \\ \href{mailto:lcpierson73@gmail.com}{lcpierson73@gmail.com}}

\begin{document}

\maketitle

\begin{abstract}
    The \emph{\tb{\tcb{chromatic symmetric}}} $X_G$ function is a symmetric function generalization of the chromatic polynomial of a graph, introduced by Stanley \cite{stanley1995symmetric}. Stanley \cite{stanley1995symmetric} gave an expansion formula for $X_G$ in terms of the \emph{\tb{\tcb{power sum symmetric functions}}} $p_\lam$ using the principle of inclusion-exclusion, and Bernardi and Nadeau \cite{bernardi2020combinatorial} gave an alternate $p$-expansion for $X_G$ in terms of acyclic orientations. Crew, Pechenik, and Spirkl \cite{kromatic2023} defined the \emph{\tb{\tcb{Kromatic symmetric function}}} $\ol{X}_G$ as a $K$-theoretic analogue of $X_G$, constructed in the same way except that each vertex is assigned a nonempty set of colors such that adjacent vertices have nonoverlapping color sets. They defined a $K$-analogue $\ol{p}_\lam$ of the power sum basis and computed the first few coefficients of the $\ol{p}$-expansion of $\ol{X}_G$ for some small graphs $G$. They conjectured that the $\ol{p}$-expansion always has integer coefficients and asked whether there is an explicit formula for these coefficients. In this note, we give a formula for the $\ol{p}$-expansion of $\ol{X}_G$, show two ways to compute the coefficients recursively (along with examples), and prove that the coefficients are indeed always integers. In a more recent paper \cite{new_p_expansion}, we use our formula from this note to give a combinatorial description of the $\ol{p}$-coefficients $[\ol{p}_\lam]\ol{X}_G$ and a simple characterization of their signs in the case of unweighted graphs.
\end{abstract}

\section{Introduction}\label{sec:intro}

Let $G$ be a graph with vertex set $V$. The \emph{\tb{\tcb{chromatic symmetric function}}} $X_G$ was introduced by Stanley in \cite{stanley1995symmetric} as a symmetric function generalization of the chromatic polynomial \cite{birkhoff1913reducibility}. Stanley gave expansion formulas for $X_G$ in a few different bases, including the \emph{\tb{\tcb{power sum symmetric functions}}} $p_\lam.$ His classic $p$-expansion formula is an alternating sum over all subsets of the edges of $G$, but he also gave an alternative formulation in terms of the broken circuit complex of $G$, which unlike his subset sum expansion does not involve extra cancellation. Bernardi and Nadeau \cite{bernardi2020combinatorial} gave an alternative interpretation of the $p$-expansion of $X_G$ in terms of the source components of its acyclic orientations.

The \emph{\tb{\tcb{Kromatic symmetric function}}} $\ol{X}_G$ was defined by Crew, Pechenik, and Spirkl in \cite{kromatic2023} as a $K$-theoretic analogue of $X_G.$ The idea of $K$-theory is to deform the cohomology ring of a topological space by introducing an extra parameter $\beta$ (often set to $-1$), so part of the motivation for defining $\ol{X}_G$ was to help shed light on possible topological interpretations for $X_G.$ Marberg \cite{marberg2023kromatic} gave an alternative construction of $\ol{X}_G$ in terms of linearly compact Hopf algebras and showed that it arises as a natural analogue of $X_G$ in that setting as well.

One natural question about $\ol{X}_G$, posed by the authors of \cite{kromatic2023}, is whether there is a $K$-analogue of Stanley's power sum expansion formula in terms of an appropriate $K$-analogue of the $p$-basis. The authors of \cite{kromatic2023} defined such a $K$-analogue of the $p$-basis, which they called the $\ol{p}$-basis. They gave a table listing the first few coefficients of the $\ol{p}$-expansions of $\ol{X}_G$ for several small graphs $G$, which they computed using Sage \cite{sage}. The leading terms (i.e. the terms where $|\lam|$ is minimal), must always match the corresponding terms from ordinary $p$-expansion of the ordinary chromatic symmetric function $X_G.$ (By leading terms, we mean the terms where $|\lam| = |V|$ for an unweighted graph, or where $|\lam| = \omega(G) := \sum_{v\in V} \omega(v)$ for a weighted graph with weight function $\omega:V\to \Z_{>0}$, corresponding to each vertex getting one color.)

Unlike the $p$-expansion of $X_G,$ they found that the $\ol{p}$-expansions for $\ol{X}_G$ appeared to have infinitely many terms, but the coefficients still seemed to always be integers. For instance, below are first few terms (grouped by $|\lam|$) of the $\ol{p}$-expansions for the single edge $\ims{figs/e}$, and for the graph $K_{21}$ consisting of a single edge connecting a vertex of weight 2 and a vertex of weight 1:
\begin{align*}
    \ol{X}_{K_{21}} &= (- \ol{p}_3 + \ol{p}_{21} ) + (\ol{p}_4 - \ol{p}_{31}) + (\ol{p}_{41} - \ol{p}_{32}) + (- \ol{p}_6 + \ol{p}_{42} + \ol{p}_{33} - \ol{p}_{321}) \\
    & \ \ \ + (-\ol{p}_{61} + \ol{p}_{421} + \ol{p}_{331}) + (\ol{p}_8 - \ol{p}_{62} + \ol{p}_{332}) + (\ol{p}_{81} + \ol{p}_{63} - \ol{p}_{621} - \ol{p}_{333} + \ol{p}_{3321}) + \dots\\ \\
    \ol{X}_{\imxs{figs/e}} &= (- \ol{p}_2 + \ol{p}_{11}) + (2\ol{p}_3 - 2\ol{p}_{21}) + (-4\ol{p}_4 + 4 \ol{p}_{31} + \ol{p}_{22} - \ol{p}_{211}) \\
    & \ \ \ + (6\ol{p}_5 - 8 \ol{p}_{41} - 3\ol{p}_{32} + 2 \ol{p}_{311} + 2\ol{p}_{221}) \\
    & \ \ \ + (-9\ol{p}_6 + 12\ol{p}_{51} + 4\ol{p}_{42} - 4\ol{p}_{411} + \ol{p}_{33} - 4\ol{p}_{321} - \ol{p}_{222} + \ol{p}_{2211})\dots.
\end{align*}
Since $\ims{figs/e}$ has $|V| = 2,$ the leading terms are $-\ol{p}_2 + \ol{p}_{11},$ matching the ordinary $p$-expansion $X_{\imxs{figs/e}} = - p_2 + p_{11}.$ Similarly, $K_{21}$ has weight $\omega(K_{12}) = 2+1=3,$ so the leading terms are $- \ol{p}_3 + \ol{p}_{21},$ which matches the ordinary $p$-expansion $X_{K_{21}} = - p_3 + p_{21}.$ We will explain where the rest of the coefficients in these expansions come from when we revisit these examples in \S \ref{sec:examples}.

The authors of \cite{kromatic2023} asked whether there is an explicit formula for the $\ol{p}$-expansion of $\ol{X}_G$. Here, we give such a formula that applies to any weighted graph $G$ with a vertex weight function $\omega:V \to \Z_{>0},$ in terms of its \emph{\tb{\tcb{independence polynomial}}} $$I_{(G,\omega)}(t) := \sum_{S\se V\text{ an independent set}}t^{\omega(S)},$$ where the weight of a subset $S$ is $\omega(S):= \sum_{v\in S}\omega(v).$ We write $G|_W$ to denote the induced subgraph of $G$ with vertex set $W.$

\begin{theorem}\label{thm:p_expansion}
    $\ol{X}_{(G,\omega)}$ satisfies the formula $$\ol{X}_{(G,\omega)} = \sum_{W\se V} (-1)^{|V\bs W|}\prod_{k\ge 1}(1+\ol{p}_k)^{a_W(k)},$$ where the exponents $a_W(k)$ are the unique (sometimes negative) real numbers satisfying $$(1+t)^{a_W(1)}(1+t^2)^{a_W(2)}(1+t^3)^{a_W(3)}\dots = I_{(G|_W,\omega)}(t).$$
\end{theorem}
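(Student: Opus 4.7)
My approach proceeds in three steps: an inclusion-exclusion over the set of vertices with nonempty color sets, a factorization of the resulting generating function as an infinite product over colors, and a translation into the $\ol{p}$-basis via the identity $\prod_c(1+x_c^k) = 1 + \ol{p}_k$.

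For the first step, let $F_{(H,\omega)}$ denote the analogue of $\ol{X}_{(H,\omega)}$ in which vertices of $H$ are permitted to receive the empty color set (adjacent vertices still must have disjoint color sets, but the empty set is disjoint from anything, so this constraint is vacuous whenever one endpoint is empty). Grouping the set-colorings counted by $F_{(G|_W,\omega|_W)}$ according to their support $U = \{v : \kappa(v) \neq \emptyset\} \se W$ gives the identity $F_{(G|_W,\omega|_W)} = \sum_{U \se W} \ol{X}_{(G|_U, \omega|_U)}$, and Möbius inversion on the Boolean lattice yields
\[ \ol{X}_{(G,\omega)} = \sum_{W \se V} (-1)^{|V\bs W|}\, F_{(G|_W,\omega|_W)}. \]

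For the second step, a set-coloring of $(H,\omega)$ allowing empty sets is the same data as an arbitrary sequence $(S_1, S_2, \dots)$ of independent sets of $H$, where $S_c = \{v : c \in \kappa(v)\}$, with associated monomial $\prod_c x_c^{\omega(S_c)}$. Since the $S_c$ are chosen independently across colors,
\[ F_{(H,\omega)} = \prod_{c\ge 1} \sum_{S \text{ indep.}} x_c^{\omega(S)} = \prod_{c\ge 1} I_{(H,\omega)}(x_c). \]
Substituting the defining factorization $I_{(G|_W,\omega|_W)}(t) = \prod_k (1+t^k)^{a_W(k)}$ and interchanging the order of products gives
\[ F_{(G|_W,\omega|_W)} = \prod_{k \ge 1}\Bigl( \prod_{c \ge 1} (1+x_c^k)\Bigr)^{a_W(k)} = \prod_{k \ge 1} (1 + \ol{p}_k)^{a_W(k)}, \]
where the last equality uses $\prod_c(1+x_c^k) = 1 + \ol{p}_k$, an identity that follows immediately from the Crew--Pechenik--Spirkl definition of $\ol{p}_k$.

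The main bookkeeping is verifying that these formal manipulations are legitimate. The exponents $a_W(k)$ are uniquely determined because $I_{(G|_W,\omega|_W)}(t)$ has constant term $1$ (the empty set is always independent), so $\log I_{(G|_W,\omega|_W)}(t) = \sum_k a_W(k) \log(1+t^k)$ recursively pins down $a_W(k)$ from the coefficient of $t^k$ together with $a_W(1), \dots, a_W(k-1)$. Each factor $(1+\ol{p}_k)^{a_W(k)}$ is a well-defined element of the completed ring of symmetric functions even for non-integer or negative $a_W(k)$ via the binomial series, because $\ol{p}_k$ has no constant term. The infinite product $\prod_{k \ge 1}$ converges since $(1+\ol{p}_k)^{a_W(k)} - 1$ has minimum total degree $k$, so only finitely many factors contribute to each homogeneous degree, and this same finiteness justifies the interchange of the two infinite products. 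Once these technicalities are settled, the three steps combine directly to give the claimed formula.
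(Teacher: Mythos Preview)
Your proof is correct and follows essentially the same approach as the paper: you introduce the auxiliary function $F_{(H,\omega)}$ (which the paper calls $Y_{(H,\omega)}$) allowing empty color sets, observe that it factors as $\prod_c I_{(H,\omega)}(x_c)$ and hence as $\prod_k(1+\ol{p}_k)^{a_W(k)}$ via the identity $1+\ol{p}_k = \prod_c(1+x_c^k)$, and then recover $\ol{X}_{(G,\omega)}$ by inclusion--exclusion over the support. The only cosmetic difference is that you present inclusion--exclusion first and the factorization second, whereas the paper states the factorization as a lemma before applying inclusion--exclusion; your added paragraph on well-definedness of the exponents and convergence of the infinite products is a welcome clarification that the paper leaves implicit.
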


The idea for where the $(1+t^k)$ terms come from is the factorization $1 + \ol{p}_k = \prod_{i \ge 1}(1 + x_i^k),$ which essentially lets us reduce to a single variable problem.

As a consequence of Theorem \ref{thm:p_expansion}, we can write down an explicit formula for the coefficients $[\ol{p}_\lam]\ol{X}_{(G,\omega)}$:

\begin{corollary}\label{cor:p_coeff}
    The coefficient of $\ol{p}_\lam$ in $\ol{X}_{(G,\omega)}$ is given by $$[\ol{p}_\lam]\ol{X}_{(G,\omega)} = \sum_{W\se V}(-1)^{|V\bs W|} \prod_{k=1}^{\ell} \binom{a_W(k)}{i_k},$$ where $\lam = \ell^{i_\ell}\dots 2^{i_2}1^{i_1}.$
\end{corollary}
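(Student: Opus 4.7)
The plan is to obtain Corollary~\ref{cor:p_coeff} as a direct formal consequence of Theorem~\ref{thm:p_expansion} by expanding each factor $(1+\ol{p}_k)^{a_W(k)}$ via the generalized binomial theorem. Recall that for any real (possibly non-integer) exponent $\alpha$, one has the formal power series identity
$$(1+y)^\alpha = \sum_{i \ge 0} \binom{\alpha}{i} y^i, \qquad \binom{\alpha}{i} := \frac{\alpha(\alpha-1)\cdots(\alpha-i+1)}{i!},$$
which is exactly the mechanism used to define $a_W(k)$ implicitly in the theorem.

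Substituting $y = \ol{p}_k$ and $\alpha = a_W(k)$, and multiplying over $k \ge 1$, I would write
\begin{align*}
    \prod_{k \ge 1}(1+\ol{p}_k)^{a_W(k)}
    &= \prod_{k \ge 1}\sum_{i_k \ge 0}\binom{a_W(k)}{i_k}\ol{p}_k^{i_k} \\
    &= \sum_\lam \left(\prod_{k \ge 1}\binom{a_W(k)}{i_k}\right)\ol{p}_\lam,
\end{align*}
where the second equality uses the multiplicativity $\ol{p}_\lam = \prod_i \ol{p}_{\lam_i}$ of the $\ol{p}$-basis (as defined in \cite{kromatic2023}) and rewrites a tuple of nonnegative integers $(i_1,i_2,\dots)$ as the partition $\lam = \ell^{i_\ell}\cdots 1^{i_1}$. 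Plugging this into the formula of Theorem~\ref{thm:p_expansion} and reading off the coefficient of $\ol{p}_\lam$ produces the stated expression.

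The only point that requires a brief justification is well-definedness of the infinite product, since we are working in an appropriate completion of the ring of symmetric functions. However, for any fixed partition $\lam$ with largest part $\ell$, extracting $[\ol{p}_\lam]$ forces $i_k = 0$ for all $k > \ell$ (contributing $\binom{a_W(k)}{0} = 1$), so only the finite product $\prod_{k=1}^{\ell} \binom{a_W(k)}{i_k}$ survives; the outer sum over $W \se V$ is finite as well. I do not expect a genuine obstacle here: the substantive content is already encoded in Theorem~\ref{thm:p_expansion}, and the corollary amounts to a bookkeeping step that identifies the $\ol{p}_\lam$-coefficient of the binomial expansion.
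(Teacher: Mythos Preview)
Your proposal is correct and follows essentially the same approach as the paper: both expand each $(1+\ol{p}_k)^{a_W(k)}$ via the (generalized) binomial theorem, invoke the multiplicativity $\ol{p}_\lam = \prod_j \ol{p}_{\lam_j}$, and read off the coefficient of $\ol{p}_\lam$ from the formula of Theorem~\ref{thm:p_expansion}. The paper spells out a short combinatorial justification of the multiplicativity of $\ol{p}_\lam$ (via independent colorings of $\ol{K_\lam}$) rather than citing it, while you add a remark on why the infinite product causes no trouble; neither difference is substantive.
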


The numbers $a_W(k)$ above can be computed recursively in two different ways, and the proofs of these recurrences will make clear why they are uniquely determined. The first (more efficient) way is as follows:
    
\begin{prop}\label{prop:a_recurrence}
    The exponents $a_W(k)$ satisfy the recurrence $$a_W(k) = [t^k]\log(I_{(G|_W,\omega)}(t)) + \sum_{d\mid k, \ d<k}\frac{(-1)^{k/d} d \cdot a_{W}(d)}k.$$
\end{prop}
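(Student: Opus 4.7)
The plan is to take logarithms of the defining identity
\[
\prod_{k\ge 1}(1+t^k)^{a_W(k)} = I_{(G|_W,\omega)}(t)
\]
and read off the coefficient of $t^n$ on both sides. Since $I_{(G|_W,\omega)}(0)=1$ (the empty set is independent), the right-hand side is a power series with constant term $1$, so $\log(I_{(G|_W,\omega)}(t))$ is a well-defined element of $\mathbb{Q}[[t]]$ with no constant term, and the manipulations below take place in that ring.

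After taking logs, the identity becomes
\[
\sum_{k\ge 1} a_W(k)\log(1+t^k) = \log(I_{(G|_W,\omega)}(t)).
\]
Using the Mercator expansion $\log(1+t^k) = \sum_{m\ge 1}\frac{(-1)^{m+1}}{m}t^{km}$ and reindexing by $n = km$, the coefficient of $t^n$ on the left is $\sum_{d\mid n}\frac{(-1)^{n/d+1}}{n/d}\,a_W(d) = \frac{1}{n}\sum_{d\mid n}(-1)^{n/d+1}\,d\,a_W(d)$. Thus
\[
n\cdot[t^n]\log(I_{(G|_W,\omega)}(t)) = \sum_{d\mid n}(-1)^{n/d+1}\,d\,a_W(d).
\]

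Next I isolate the $d=n$ term, which contributes $(-1)^{2}\,n\,a_W(n) = n\,a_W(n)$, and solve for $a_W(n)$:
\[
a_W(n) = [t^n]\log(I_{(G|_W,\omega)}(t)) \;-\; \sum_{\substack{d\mid n \\ d<n}} \frac{(-1)^{n/d+1}\,d\,a_W(d)}{n}.
\]
Flipping the sign inside the sum via $-(-1)^{n/d+1} = (-1)^{n/d}$ yields exactly the recurrence in the statement. Since the recurrence expresses $a_W(n)$ in terms of $a_W(d)$ for proper divisors $d\mid n$ and the base case $a_W(1) = [t]\log(I_{(G|_W,\omega)}(t))$ is unconditionally determined, this simultaneously establishes both the uniqueness claim in Theorem~\ref{thm:p_expansion} and the existence of the $a_W(k)$ as real numbers.

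There is no real obstacle here: the argument is a clean Möbius-style inversion of the defining product, and the only thing to be slightly careful about is ensuring we are working in a ring where $\log$ makes sense (guaranteed by $I_{(G|_W,\omega)}(0)=1$) and correctly tracking the sign $(-1)^{n/d+1}$ when moving it to the other side of the equation.
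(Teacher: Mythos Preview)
Your proof is correct and follows essentially the same route as the paper: take logarithms of the defining product, expand $\log(1+t^k)$ as a power series, and isolate the top divisor to obtain the recurrence. Your version is marginally more careful in noting that $I_{(G|_W,\omega)}(0)=1$ so that the logarithm is well-defined in $\mathbb{Q}[[t]]$, but otherwise the arguments are the same.
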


Note that because of the division by $k$ and the non-integrality of the coefficients of $\log(I_{(G|_W,\omega)}(t)$, it is not immediately clear from Proposition \ref{prop:a_recurrence} whether the $a_W(k)$'s are integers. However, our second recursive formula does make it clear that they are integers:

\begin{prop}\label{prop:second_recurrence}
    The coefficients $a_W(k)$ also satisfy the alternative recurrence $$a_W(k) = [t^k]I_{(G|_W,\omega)}(t) - \sum_{\lam \vdash k, \ \lam\ne k} \ \prod_{j=1}^{k-1} \binom{a_W(k)}{i_j}.$$ Hence, $a_W(k)$ is an integer for all $W$ and $k.$
\end{prop}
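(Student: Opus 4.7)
The plan is to extract the recurrence directly from the defining identity
\[ \prod_{k \ge 1} (1+t^k)^{a_W(k)} = I_{(G|_W,\omega)}(t) \]
by comparing coefficients of $t^k$ on both sides. First I would apply the generalized binomial theorem to each factor, writing $(1+t^j)^{a_W(j)} = \sum_{i_j \ge 0} \binom{a_W(j)}{i_j} t^{j i_j}$, where $\binom{a}{i}$ denotes the polynomial $a(a-1)\cdots(a-i+1)/i!$. Multiplying these series together and grouping monomials by total degree, each monomial $t^n$ arises from a choice of nonnegative integers $(i_1, i_2, \dots)$ with $\sum_j j \cdot i_j = n$, i.e. from a partition $\lam = \ell^{i_\ell}\cdots 1^{i_1}$ of $n$. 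This yields
\[ [t^n]\, I_{(G|_W,\omega)}(t) = \sum_{\lam \vdash n} \prod_{j \ge 1} \binom{a_W(j)}{i_j}. \]

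Next, I would isolate the contribution of the partition $\lam = (k)$ itself, which has $i_k = 1$ and all other $i_j = 0$, so contributes $\binom{a_W(k)}{1} = a_W(k)$. Every other partition $\lam \vdash k$ has largest part strictly less than $k$, so its contribution $\prod_{j=1}^{k-1} \binom{a_W(j)}{i_j}$ involves only the values $a_W(j)$ for $j < k$. Solving for $a_W(k)$ produces exactly the stated recurrence, and as a byproduct shows that $a_W(k)$ is uniquely determined by $a_W(1), \dots, a_W(k-1)$ (matching the uniqueness asserted in Theorem \ref{thm:p_expansion}).

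For integrality I would induct on $k$. The base case $a_W(1) = [t^1]\, I_{(G|_W,\omega)}(t)$ equals the number of weight-$1$ vertices of $W$, an integer. For the inductive step, $[t^k]\, I_{(G|_W,\omega)}(t)$ counts independent sets $S \se W$ with $\omega(S) = k$ and is therefore a non-negative integer, and each $\binom{a_W(j)}{i_j}$ is an integer because $a_W(j) \in \Z$ by inductive hypothesis and $i_j \in \Z_{\ge 0}$. Thus every term on the right side of the recurrence is an integer, so $a_W(k) \in \Z$. There is no substantial obstacle here: the argument amounts to careful bookkeeping of the generalized binomial expansion, with integrality falling out directly from the induction.
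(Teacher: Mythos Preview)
Your proposal is correct and follows essentially the same route as the paper: both expand the defining product $\prod_{j\ge 1}(1+t^j)^{a_W(j)}=I_{(G|_W,\omega)}(t)$ via the binomial theorem, identify the $t^k$ coefficient as a sum over partitions of $k$, and isolate the $\lam=(k)$ term to obtain the recurrence. Your write-up is in fact slightly more explicit than the paper's about the induction for integrality (giving the base case and spelling out why each term on the right side is an integer), but the underlying argument is the same.
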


An immediate consequence of Proposition \ref{prop:second_recurrence} and Corollary \ref{cor:p_coeff} is that the $\ol{p}$-coefficients are integers, resolving the question from \cite{kromatic2023}:

\begin{corollary}\label{cor:integrality}
    For every partition $\lam,$ the coefficient $[\ol{p}_\lam]\ol{X}_{(G,\omega)}$ is an integer.
\end{corollary}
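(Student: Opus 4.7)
The plan is to simply splice together the two preceding results, namely Corollary~\ref{cor:p_coeff} and Proposition~\ref{prop:second_recurrence}. By Corollary~\ref{cor:p_coeff}, writing $\lam = \ell^{i_\ell}\dots 2^{i_2} 1^{i_1}$, I would start from the closed form
$$[\ol{p}_\lam]\ol{X}_{(G,\omega)} = \sum_{W\se V}(-1)^{|V\bs W|} \prod_{k=1}^{\ell} \binom{a_W(k)}{i_k}.$$
The sum is a finite sum over subsets $W \se V$, the signs $(-1)^{|V\bs W|}$ are integers, and each $i_k$ is a nonnegative integer. So the whole statement reduces to showing that each generalized binomial $\binom{a_W(k)}{i_k}$ is an integer.

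The next step is to invoke Proposition~\ref{prop:second_recurrence}, which already asserts that $a_W(k)\in\Z$ for every $W\se V$ and every $k\ge 1$. Given that $a_W(k)$ is an integer (possibly negative) and $i_k$ is a nonnegative integer, the quantity
$$\binom{a_W(k)}{i_k} = \frac{a_W(k)\bigl(a_W(k)-1\bigr)\cdots\bigl(a_W(k)-i_k+1\bigr)}{i_k!}$$
is an integer: this is clear when $a_W(k)\ge 0$, and for $a_W(k)<0$ one uses the standard identity $\binom{-n}{m} = (-1)^m \binom{n+m-1}{m}$, which is a signed ordinary binomial coefficient. Thus each summand in the displayed formula is an integer, hence so is $[\ol{p}_\lam]\ol{X}_{(G,\omega)}$.

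There is essentially no obstacle at this stage: all the genuine content has already been carried out in the derivation of the integer-valued recurrence of Proposition~\ref{prop:second_recurrence}. The only thing worth stating explicitly in the write-up is the remark on generalized binomial coefficients with negative integer top arguments, so that the reader does not worry about the denominator $i_k!$.
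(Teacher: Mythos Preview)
Your proposal is correct and matches the paper's primary argument exactly: the paper states that Corollary~\ref{cor:integrality} is an immediate consequence of Corollary~\ref{cor:p_coeff} together with the integrality of the $a_W(k)$ from Proposition~\ref{prop:second_recurrence}. The paper additionally supplies a second, independent proof via the upper-triangular transition matrix between the $\ol{p}$-basis and the $\m$-basis, but your route is the one it presents first.
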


Corollary \ref{cor:integrality} can also be proven in a different way by considering the transition formula between the $\ol{p}$-basis and another basis defined in \cite{kromatic2023}, the $\m$-basis. Hence, we will also give a short alternative proof of Corollary \ref{cor:integrality} in \S \ref{sec:p_expansion_proof} based on that transition formula.

\bigskip

Another question one can ask is about the signs of the coefficients. For $G$ an unweighted graph, it can be shown that the sign of $[p_\lam]X_G$ is $(-1)^{|\lam|-\ell(\lam)}$. In a more recent paper \cite{new_p_expansion}, we use Theorem \ref{thm:p_expansion} to give a combinatorial interpretation of the coefficients $[\ol{p}_\lam]\ol{X}_G,$ and we show that their signs follow the same pattern as the signs of $[p_\lam]X_G$, and that the signs of the exponents $a_W(k)$ alternate: 

\begin{theorem}\label{thm:signs}
    For $G$ unweighted, the sign of $[\ol{p}_\lam]\ol{X}_G$ is $(-1)^{|\lam|-\ell(\lam)}.$ Also, the sign of $a_W(k)$ is $(-1)^{k+1}.$
\end{theorem}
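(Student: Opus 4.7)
The plan is to tackle the two claims separately, first establishing the sign of $a_W(k)$ and then using it to deduce the sign of $[\ol{p}_\lam]\ol{X}_G$.

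For the sign of $a_W(k)$, fix $W \se V$ and set $I(t) := I_{(G|_W,\omega)}(t)$. Taking the logarithm of $\prod_k (1+t^k)^{a_W(k)} = I(t)$ and expanding $\log(1+t^k) = \sum_{m\ge 1}\frac{(-1)^{m+1}}{m}t^{mk}$ gives
\[
n\,[t^n]\log I(t) \;=\; \sum_{k\mid n} k\,a_W(k)\,(-1)^{n/k+1},
\]
so that $k\mapsto k\,a_W(k)$ is the Dirichlet convolution inverse of $n\,[t^n]\log I(t)$ against the character $\chi(m) := (-1)^{m+1}$. M\"obius-inverting yields a closed formula for $a_W(k)$, but neither this nor the recurrences in Propositions \ref{prop:a_recurrence} and \ref{prop:second_recurrence} makes the alternating sign visible. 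The next step is to reinterpret the inverted sum combinatorially: $\log I(t)$ is the signed generating function for words in the nonempty independent sets of $G|_W$ via $\log(1+N) = \sum_m(-1)^{m+1}N^m/m$ with $N := I - 1$, and the natural Lyndon/necklace argument---twisted by the character $\chi$ that arises from using $1+t^k$ in place of $\frac{1}{1-t^k}$---should present $(-1)^{k+1}a_W(k)$ as a genuine count of Lyndon-type objects assembled from independent sets of $G|_W$. This is the combinatorial model developed in \cite{new_p_expansion}, and producing it is the main obstacle.

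Once the sign of $a_W(k)$ is known, the sign of $[\ol{p}_\lam]\ol{X}_G$ follows almost termwise from Corollary \ref{cor:p_coeff}. When $a_W(k) \ge 0$ the binomial $\binom{a_W(k)}{i_k}$ is nonnegative, and if $a_W(k) = -m \le 0$ then $\binom{-m}{i_k} = (-1)^{i_k}\binom{m+i_k-1}{i_k}$, so the sign alternation of $\binom{a_W(k)}{i_k}$ occurs exactly when $k$ is even. Combined with the identity $|\lam|-\ell(\lam) = \sum_k(k-1)i_k \equiv \sum_{k\text{ even}}i_k \pmod 2$, each nonzero summand in the expression for $[\ol{p}_\lam]\ol{X}_G$ has sign $(-1)^{|V\bs W|}\cdot(-1)^{|\lam|-\ell(\lam)}$. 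Pulling the uniform factor $(-1)^{|\lam|-\ell(\lam)}$ out of the sum reduces the claim to showing that the residual inclusion--exclusion over $W$ is nonnegative, which again follows by identifying it with the combinatorial count from \cite{new_p_expansion}; the bookkeeping is routine once the right model is in hand.
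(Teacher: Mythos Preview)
The paper does not actually prove Theorem~\ref{thm:signs}. It states the result and explicitly defers the proof to the companion paper \cite{new_p_expansion}; Section~\ref{sec:p_expansion_proof} lists only Theorem~\ref{thm:p_expansion}, Corollary~\ref{cor:p_coeff}, Propositions~\ref{prop:a_recurrence} and~\ref{prop:second_recurrence}, and Corollary~\ref{cor:integrality} as being proved here. So there is no in-paper argument to compare your proposal against.

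Evaluated on its own, your proposal is a correct reduction but not a proof, and it is candid about this: both substantive steps are punted to \cite{new_p_expansion}. The intermediate bookkeeping is sound --- the parity identity $|\lam|-\ell(\lam)=\sum_k(k-1)i_k\equiv\sum_{k\text{ even}}i_k\pmod 2$ is right, and reading the sign of $\binom{a_W(k)}{i_k}$ as $(-1)^{i_k}$ exactly when $k$ is even (given the claimed sign of $a_W(k)$) is the correct consequence of $\binom{-m}{i}=(-1)^i\binom{m+i-1}{i}$. That cleanly pulls the factor $(-1)^{|\lam|-\ell(\lam)}$ out of the sum in Corollary~\ref{cor:p_coeff}.

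The gap is that the two remaining claims are the entire content of the theorem. First, producing a Lyndon-type model that exhibits $(-1)^{k+1}a_W(k)$ as a nonnegative count is exactly the work done in \cite{new_p_expansion}; your sketch of ``twisting by $\chi(m)=(-1)^{m+1}$'' names the right ingredient but does not carry it out. Second, calling the residual inclusion--exclusion ``routine'' undersells it: after your reduction one must show that
\[
\sum_{W\se V}(-1)^{|V\setminus W|}\prod_{k\text{ odd}}\binom{b_W(k)}{i_k}\prod_{k\text{ even}}\binom{b_W(k)+i_k-1}{i_k}\ \ge\ 0,
\]
with $b_W(k):=(-1)^{k+1}a_W(k)\ge 0$, and the summands here genuinely alternate in sign with $|V\setminus W|$. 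Controlling that alternating sum requires an injective or combinatorial argument (again the subject of \cite{new_p_expansion}), not just bookkeeping. As written, the proposal matches the paper in that it locates the proof in \cite{new_p_expansion} rather than supplying it.
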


The first sentence of Theorem \ref{thm:signs} was stated as a conjecture in an earlier draft of this note, and that conjecture was suggested to the author by Oliver Pechenik. It does not apply in general to weighted graphs. For instance, in the $\ol{p}$-expansion of $K_{21}$ above, the coefficient for $\lam = 3$ is $-1$ while $|\lam|-\ell(\lam) = 3-1 = 2$ is even. The statement about the alternating signs of the $a_W(k)$'s also does not apply to unweighted graphs. In particular, we will show in \S \ref{sec:examples} that for $K_{21}$ and $W = V(K_{21}),$ $a_W(k)$ is positive for $k=2^j$, negative for $k=3\cdot 2^j,$ and 0 otherwise.

\bigskip

The remainder of this note is organized as follows. In \S \ref{sec:background}, we define some necessary terminology. In \S \ref{sec:examples}, we revisit the examples above to illustrate Theorem \ref{thm:p_expansion} and Propositions \ref{prop:a_recurrence} and \ref{prop:second_recurrence}. Then in \S \ref{sec:p_expansion_proof}, we prove Theorem \ref{thm:p_expansion}, Corollary \ref{cor:p_coeff}, Propositions \ref{prop:a_recurrence} and \ref{prop:second_recurrence}, and Corollary \ref{cor:integrality}.

\section{Background}\label{sec:background}

A \emph{\tb{\tcb{graph}}} $G$ is a set $V$ of \emph{\tb{\tcb{vertices}}} together with a set $E$ of \emph{\tb{\tcb{edges}}}, where each edge is an unordered pair of vertices. If $uv$ is an edge, we say that the vertices $u$ and $v$ are \emph{\tb{\tcb{adjacent}}}. A \emph{\tb{\tcb{weighted graph}}} $(G,\omega)$ is a graph $G$ together with a \emph{\tb{\tcb{weight function}}} $\omega:V\to \Z_{>0}$ that assigns a positive integer to each vertex. An \emph{\tb{\tcb{independent set}}} or \emph{\tb{\tcb{stable set}}} is a subset of $V$ in which no two vertices are adjacent. For $W\se V,$ the \emph{\tb{\tcb{induced subgraph}}} $G|_W$ with vertex set $v$ is the graph with vertex set $V(G|_W) := W$ and edge set $E(G|_W) := \{uv\in E(G):u,v\in W\}.$ That is, the edges of $G|_W$ are precisely the edges of $G$ with both endpoints in $W.$

A \emph{\tb{\tcb{symmetric function}}} $f(x_1,x_2,\dots)$ is a function that stays the same under any permutation of the variables, i.e. $f(x_1,x_2,\dots) = f(x_{\sigma(1)},x_{\sigma(2)},\dots)$ for any permutation $\sigma$ of $\Z_{>0}.$ A \emph{\tb{\tcb{proper coloring}}} of $V$ is a function $\alpha:V\to \Z_{>0}$ that assigns a positive integer valued color to each vertex such that $\alpha(u)\ne \alpha(v)$ whenever $uv \in E(G).$ The \emph{\tb{\tcb{chromatic symmetric function}}} is $$X_G := \sum_\alpha \prod_{v\in V} x_{\alpha(v)}$$ is the sum over all proper set colorings of the product of all the variables corresponding to colors used, counted with multiplicity.

A \emph{\tb{\tcb{partition}}} $\lam = \lam_1\lam_2\dots \lam_k$ is a nonincreasing sequence of positive integers $\lam_1 \ge \lam_2 \ge \dots \ge \lam_k,$ and $\lam_1,\dots,\lam_k$ are its \emph{\tb{\tcb{parts}}}. We write $|\lam| := \lam_1+\dots+\lam_k$ for the \emph{\tb{\tb{size}}} of $\lam$ and $\ell(\lam) = k$ for its \emph{\tb{\tcb{length}}} or number of parts. If $n=|\lam|,$ we say that $\lam$ is a \emph{\tb{\tcb{partition of $\boldsymbol{n}$}}} and write $\lam \vdash n.$ We can alternatively write $\lam = \ell^{i_\ell}(\ell-1)^{i_{\ell-1}}\dots 2^{i_2}1^{i_1}$ to denote the partition with $i_j$ parts of size $j$ for each $j = 1,2,\dots,\ell-1,\ell.$ For a partition $\lam=\lam_1\lam_2\dots \lam_k,$ the \emph{\tb{\tcb{power sum symmetric function}}} $p_\lam$ is $$p_\lam := \prod_{i=1}^{k} \left(\sum_{j\ge 1} x_j^{\lam_i}\right) = (x_1^{\lam_1} + x_2^{\lam_1} + \dots)(x_1^{\lam_2} + x_2^{\lam_2} + \dots)\dots(x_1^{\lam_k} + x_2^{\lam_k} + \dots).$$ 



A \emph{\tb{\tcb{proper set coloring}}} is a function $\alpha:V\to 2^{\Z_{>0}}\bs\{\varnothing\}$ that assigns a nonempty set of positive integer valued colors to each vertex such that adjacent vertices have nonoverlapping color sets, i.e. $\alpha(u)\cap \alpha(v) = \varnothing$ whenever $uv\in E(G).$ For a weighted graph $(G,\omega)$, the \emph{\tb{\tcb{Kromatic symmetric function}}} is $$\ol{X}_{(G,\omega)} := \sum_{\alpha} \prod_{v\in V}\prod_{i\in \alpha(v)}x_i^{\omega(v)}.$$ The authors of \cite{kromatic2023} define the $K$-analogue of the power sum symmetric functions as $$\ol{p}_\lam := \ol{X}_{\ol{K_\lam}},$$ where $\ol{K_\lam}$ is the edgeless graph whose vertex weights are the parts of $\lam.$

\section{Examples}\label{sec:examples}

We will now revisit the two example $\ol{p}$-expansions of $\ol{X}_{K_{21}}$ and $\ol{X}_{\imxs{figs/e}}$ from \S\ref{sec:intro} and show how to compute their coefficients using Theorem \ref{thm:p_expansion}, together with Proposition \ref{prop:a_recurrence} or \ref{prop:second_recurrence}.

\begin{example}
    Let $G = K_{21}$ consist of two connected vertices $v$ and $w,$ with $\omega(v) = 1$ and $\omega(w) = 2$. Then $$I_{(G,\omega)}(t) = 1 + t + t^2,$$ since $G$ has three independent sets: $\varnothing$ has weight 0, $\{v\}$ has weight 1, and $\{w\}$ has weight 2. By difference of cubes, $$I_{(G,\omega)}(t) = \frac{1-t^3}{1-t}.$$ Then by repeated difference of squares, $$\frac1{1-t} = (1+t)(1+t^2)(1+t^4)(1+t^8)\dots = \prod_{j\ge 0} (1+t^{2^j}),$$ and similarly, $$1-t^3 = \frac1{1+t^3}\cdot\frac1{1+t^6}\cdot\frac1{1+t^{12}}\cdot\frac1{1+t^{24}}\dots = \prod_{j\ge 0}\frac1{1+t^{3\cdot 2^j}},$$ which gives $$I_{(G,\omega)}(t) = \prod_{j\ge 0}(1+t^{2^j})\cdot\prod_{j\ge 0}\frac1{1+t^{3\cdot 2^j}}.$$ The other subsets of $V$ are $\{v\},$ $\{w\},$ and $\varnothing.$ In these cases, we get:
    \begin{align*}
        I_{(G|_{\{v\}},\omega)}(t) &= 1 + t, \\
        I_{(G|_{\{w\}},\omega)}(t) &= 1+t^2, \\
        I_{(G|_{\varnothing},\omega)}(t) &= 1.
    \end{align*}
    Putting this together, Theorem \ref{thm:p_expansion} gives $$\ol{X}_{(G,\omega)}(t) = \prod_{j\ge 0} (1+\ol{p}_{2^j})\cdot\prod_{j\ge0}\frac1{1+\ol{p}_{3\cdot 2^j}} - (1 + \ol{p}_1) - (1  + \ol{p}_2) + 1.$$ Note that in this case, the superscript $2^j$ refers to the partition with one part whose size is $2^j$ rather than the partition with $j$ parts of size 2. It follows that all $\ol{p}$-coefficients are $\pm 1$ or 0 in this case. In particular, $$[\ol{p}_\lam]\ol{X}_{(G,\omega)} = (-1)^{\#\tn{ of parts of }\lam\tn{ that are multiples of 3}}$$ provided that $\lam$ satisfies all of the following properties:
    \begin{itemize}
        \item $|\lam| \ge 3.$
        \item All parts are of the form $2^j$ or $3\cdot 2^j$ for some $j\ge 0.$
        \item No parts of the form $2^j$ are repeated.
    \end{itemize}
    All other coefficients are 0. To illustrate this, we can expand out the first few terms to get
    \begin{align*}
        \ol{X}_{(G,\omega)}(t) &= (1 + \ol{p}_1)(1 + \ol{p}_2)(1 + \ol{p}_4)\dots\frac1{1+\ol{p}_3}\cdot\frac1{1+\ol{p}_6}\cdot\frac1{1+\ol{p}_{12}}\cdot \dots - (1 + \ol{p}_1 + \ol{p}_2) \\
        &= (1 + \ol{p}_1)(1 + \ol{p}_2)(1 + \ol{p}_4)\dots(1 - \ol{p}_3 + \ol{p}_{33} - \ol{p}_{333} + \dots)(1 - \ol{p}_6 + \ol{p}_{66} - + \dots)\dots - (1 + \ol{p}_1 + \ol{p}_2) \\
        &= (- \ol{p}_3 + \ol{p}_{21} ) + (\ol{p}_4 - \ol{p}_{31}) + (\ol{p}_{41} - \ol{p}_{32}) + (- \ol{p}_6 + \ol{p}_{42} + \ol{p}_{33} - \ol{p}_{321}) \\
        & \ \ \ + (-\ol{p}_{61} + \ol{p}_{421} + \ol{p}_{331}) + (\ol{p}_8 - \ol{p}_{62} + \ol{p}_{332}) + (\ol{p}_{81} + \ol{p}_{63} - \ol{p}_{621} - \ol{p}_{333} + \ol{p}_{3321}) + \dots
    \end{align*}
    We can see that these terms match the ones from \S \ref{sec:intro}.
\end{example}

For the seemingly simpler case of a single \emph{unweighted} edge $\ims{figs/e}$, there is not such a simple formula for the coefficients:

\begin{example}
    Let $G = \ims{figs/e} = \{v,w\},$ where $v$ and $w$ both have weight 1. Then we get $$I_G(t) = 1+2t,$$ since $\{v\}$ and $\{w\}$ are stable sets of size 1 and $\varnothing$ is a stable set of size 2. In this case the factorization begins $$1+2t = (1+t)^2\cdot\frac1{1+t^2}\cdot(1+t^3)^2\cdot\frac1{(1+t^4)^4}\cdot(1+t^5)^6 \dots.$$ Thus, for $W = V,$ the sequence of exponents $a_W(k)$ begins $$a_W(1) = 2, \ \ a_W(2) = -1, \ \ a_W(3) = 2, \ \ a_W(4) = -4, \ \ a_W(5) = 6,\dots$$ These numbers match sequence A038067 in the Online Encyclopedia of Integer Sequences \cite{oeis}. They can be computed recursively using either Proposition \ref{prop:a_recurrence} or Proposition \ref{prop:second_recurrence}:
    \begin{itemize}
        \item To use Proposition \ref{prop:a_recurrence}, we first note that 
        \begin{align*}
            \log(I_G(t)) = \log(1+2t) &= 2t - \frac{(2t)^2}{2} + \frac{(2t)^3}{3} - \frac{(2t)^4}{4} + \frac{(2t)^5}{5} - \cdots \\
            &= 2t - 2t^2 + \frac{8t^3}3 - 4t^4 + \frac{32t^5}{5}-\cdots.
        \end{align*} Thus we get
        \begin{align*}
            a_W(1) &= [t]\log(I_G(t))  = 2, \\
            a_W(2) &= [t^2]\log(I_G(t)) + \frac{(-1)^{2/1}\cdot 1\cdot a_W(1)}{2} = -2 + 1 = -1, \\
            a_W(3) &= [t^3]\log(I_G(t)) + \frac{(-1)^{3/1}\cdot 1\cdot a_W(1)}{3} = \frac83-\frac23 = 2, \\
            a_W(4) &= [t^4]\log(I_G(t)) + \frac{(-1)^{4/2}\cdot2\cdot a_W(2)}{4} + \frac{(-1)^{4/1}\cdot1\cdot a_W(1)}{4} = -4 -\frac12+\frac12 = -4, \\ 
            a_W(5) &= [t^5]\log(I_G(t)) + \frac{(-1)^{5/1}\cdot1\cdot a_W(1)}{5} = \frac{32}5 - \frac25 = 6.
        \end{align*}
        \item Alternatively, to use Proposition \ref{prop:second_recurrence}, we list the partitions $k$ for each $k=1,2,3,4,5$:
        \begin{itemize}
            \item $\boldsymbol{k=1:}$ 1,
            \item $\boldsymbol{k=2:}$ 2, $1^2$,
            \item $\boldsymbol{k=3:}$ 3, 21, $1^3$,
            \item $\boldsymbol{k=4:}$ 4, 31, $2^2$, $21^2,$ $1^4$,
            \item $\boldsymbol{k=5:}$ 5, 41, 32, $31^2$, $2^21,$ $21^3,$ $1^5.$
        \end{itemize}
        Then we apply Proposition \ref{prop:second_recurrence} to compute
        \begin{align*}
            a_W(1) &= [t]I_G(t) = 2, \\
            a_W(2) &= [t^2]I_G(t) - \binom{a_W(1)}{2} = 0 - \binom{2}{2} = 0 - 1 = -1, \\
            a_W(3) &= [t^3]I_G(t) - \binom{a_W(2)}{1}\binom{a_W(1)}{1} - \binom{a_W(1)}{3} \\
            &= 0 - \binom{-1}{1}\binom{2}{1} - \binom{2}{3} \\
            &= 0 - (-1)(2) - 0 = 2, \\
            a_W(4) &= [t^4]I_G(t) - \binom{a_W(3)}{1}\binom{a_W(1)}{1} - \binom{a_W(2)}{2} - \binom{a_W(2)}{1}\binom{a_W(1)}{2} - \binom{a_W(1)}{4} \\
            &= 0 - \binom{2}{1}\binom{2}{1} - \binom{-1}{2} - \binom{-1}{1}\binom{2}{2} - \binom{2}{4} \\
            &= 0 - (2)(2) - 1 - (-1)(1) - 0 = -4, \\
            a_W(5) &= [t^5]I_G(t) - \binom{a_W(4)}{1}\binom{a_W(1)}{1} - \binom{a_W(3)}{1}\binom{a_W(2)}{1} - \binom{a_W(3)}{1}\binom{a_W(1)}{2} \\ & \hspace{1.5cm} \ \ -\binom{a_W(2)}{2}\binom{a_W(1)}{1}
            - \binom{a_W(2)}{1}\binom{a_W(1)}{3} - \binom{a_W(1)}{5} \\
            &= 0 - \binom{-4}{1}\binom{2}{1} - \binom{2}{1}\binom{-1}{1} - \binom{2}{1}\binom{2}{2} - \binom{-1}{2}\binom{2}{1} - \binom{-1}{1}\binom{2}{3} - \binom{2}{5} \\
            &= 0 - (-4)(2) - (2)(-1) - (2)(1) - (1)(2) - (-1)(0) - 0 = 8 + 2 - 2 - 2 = 6.
        \end{align*}
    \end{itemize}
    Both methods give us the same exponents $a_W(1) = 2, \ a_W(2) = -1, \ a_W(3) = 2, \ a_W(4) = -4, \ a_W(5) = 6$ that we listed previously.

    \bigskip
    
    To finish computing the $\ol{p}$-expansion of $\ol{X}_G$, we note that the other stable sets are $\{v\},$ $\{w\}$, and $\varnothing,$ which have independence polynomials $$I_{G|_{\{v\}}}(t) = I_{G|_{\{w\}}}(t) = 1+t, \ \ \ \ I_{G|_{\varnothing}}(t) = 1.$$ Putting this together, Theorem \ref{thm:p_expansion} gives \begin{align*}\ol{X}_G(t) &= (1+\ol{p}_1)^2\cdot\frac1{1+\ol{p}_2}\cdot(1+\ol{p}_3)^2\cdot\frac1{(1+\ol{p}_4)^4}\cdot(1+\ol{p}_5)^6\dots - (1 + \ol{p}_1) - (1 + \ol{p}_1) + 1 \\
    &= (1 + 2\ol{p}_1 + \ol{p}_{11})(1 - \ol{p}_2 + \ol{p}_{22} - \dots)(1 + 2\ol{p}_3 + \ol{p}_{33})(1 - 4\ol{p}_4 + 10\ol{p}_{44} - 20\ol{p}_{444} + \dots)\dots - (1 + 2\ol{p}_1) \\
    &= (- \ol{p}_2 + \ol{p}_{11}) + (2\ol{p}_3 - 2\ol{p}_{21}) + (-4\ol{p}_4 + 4 \ol{p}_{31} + \ol{p}_{22} - \ol{p}_{211}) \\
    & \ \ \ + (6\ol{p}_5 - 8 \ol{p}_{41} - 3\ol{p}_{32} + 2 \ol{p}_{311} + 2\ol{p}_{221}) \\
    & \ \ \ + (-9\ol{p}_6 + 12\ol{p}_{51} + 4\ol{p}_{42} - 4\ol{p}_{411} + \ol{p}_{33} - 4\ol{p}_{321} - \ol{p}_{222} + \ol{p}_{2211})\dots,
    \end{align*}
    matching the expansion from \S \ref{sec:intro}.
\end{example}

\section{Proofs}\label{sec:p_expansion_proof}

In this section we will prove Theorem \ref{thm:p_expansion}, Corollary \ref{cor:p_coeff}, Propositions \ref{prop:a_recurrence} and \ref{prop:second_recurrence}, and Corollary \ref{cor:integrality}.

\begin{proof}[Proof of Theorem \ref{thm:p_expansion}]
Our key lemma is an expansion formula for Stanley's $Y_G$ function (introduced in \cite{stanley1998graph}), where $Y_{(G,\omega)}$ is defined to be the same as $\ol{X}_{(G,\omega)}$ except that it includes monomials corresponding to colorings where some vertices are not assigned any colors: $$Y_{(G,\omega)} := \sum_\kappa \prod_{v\in V}\left(\prod_{i\in \kappa(v)} x_i\right)^{\omega(v)},$$ where the sum ranges over all functions $\kappa:V\to 2^n$ such that $\kappa(v)\cap \kappa(w) =\varnothing$ whenever $vw\in E(G).$

\begin{lemma}\label{lem:Y_G}
    $Y_{(G,\omega)}$ factors as $$Y_{(G,\omega)} = \prod_{k\ge 1} (1 + \ol{p}_k)^{a_{V}(k)}.$$
\end{lemma}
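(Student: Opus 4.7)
The plan is to exploit the fact that in the definition of $Y_{(G,\omega)}$, the set-coloring $\kappa$ can be reorganized by colors rather than by vertices. Specifically, a function $\kappa: V\to 2^{\mb{Z}_{>0}}$ with $\kappa(u)\cap \kappa(v) = \varnothing$ for $uv\in E(G)$ is equivalent data to a tuple $(S_i)_{i\ge 1}$ where $S_i := \{v\in V : i\in \kappa(v)\}$ is an independent set in $G$ and almost all $S_i$ are empty. Under this bijection, the monomial weight becomes
\[
\prod_{v\in V}\Big(\prod_{i\in \kappa(v)} x_i\Big)^{\omega(v)} = \prod_{i\ge 1}\prod_{v\in S_i} x_i^{\omega(v)} = \prod_{i\ge 1} x_i^{\omega(S_i)}.
\]
So the sum over all such $\kappa$ factors over colors, yielding
\[
Y_{(G,\omega)} = \prod_{i\ge 1}\Big(\sum_{S\se V \text{ independent}} x_i^{\omega(S)}\Big) = \prod_{i\ge 1} I_{(G,\omega)}(x_i).
\]

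Next I would apply the defining factorization of the exponents $a_V(k)$, namely $I_{(G,\omega)}(t) = \prod_{k\ge 1}(1+t^k)^{a_V(k)}$, substituting $t = x_i$ for each $i$ and interchanging the two products:
\[
Y_{(G,\omega)} = \prod_{i\ge 1}\prod_{k\ge 1}(1+x_i^k)^{a_V(k)} = \prod_{k\ge 1}\Big(\prod_{i\ge 1}(1+x_i^k)\Big)^{a_V(k)}.
\]
Finally, I would invoke the identity $1 + \ol{p}_k = \prod_{i\ge 1}(1 + x_i^k)$ highlighted in the introduction (this is really the definition of $\ol{p}_k = \ol{X}_{\ol{K_{(k)}}}$ applied to a single-vertex edgeless graph of weight $k$) to conclude
\[
Y_{(G,\omega)} = \prod_{k\ge 1}(1+\ol{p}_k)^{a_V(k)}.
\]

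The main subtlety, rather than an obstacle, is formal: some of the $a_V(k)$ are negative, so these infinite products must be interpreted as formal power series in the symmetric function ring (equivalently, as elements of the completion with respect to degree). Checking that the product converges in this sense amounts to observing that only finitely many $k$ contribute to any fixed total degree, and that for each $k$ the factor $(1+\ol{p}_k)^{a_V(k)}$ is a well-defined formal series since $\ol{p}_k$ has no constant term. The interchange of products above is then legitimate degree by degree, as is the factorization $\sum_\kappa \cdots = \prod_i I_{(G,\omega)}(x_i)$, where on any fixed monomial only finitely many $i$ contribute a nontrivial factor.
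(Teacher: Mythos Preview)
Your proof is correct and follows essentially the same route as the paper: reorganize the set-coloring by colors to obtain $Y_{(G,\omega)} = \prod_{i\ge 1} I_{(G,\omega)}(x_i)$, apply the defining factorization $I_{(G,\omega)}(t) = \prod_{k\ge 1}(1+t^k)^{a_V(k)}$, swap the products, and use $1+\ol{p}_k = \prod_{i\ge 1}(1+x_i^k)$. Your added paragraph on formal convergence (interpreting everything in the degree completion) is a nice clarification that the paper leaves implicit.
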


\begin{proof}
    Since $Y_{(G,\omega)}$ allows vertices to not receive any colors, we can think of assigning each color to a subset of the vertices independently of all other colors. Specifically, for each color, we can independently choose a subset of $V$ which will receive that color, and the only restriction is that the subset be an independent set. Thus, the generating series for the ways to choose which vertices receive color $i$ is given by the independence polynomial $I_{(G,\omega)}(x_i)$. Since different colors can be assigned independently, we can compute $Y_{(G,\omega)}$ by simply multiplying over all colors, so $$Y_{(G,\omega)} = \prod_{i\ge 1} I_{(G,\omega)}(x_i).$$ Then by the definition of $a_{V}(k)$, we have $$I_{(G,\omega)}(x_i) = \prod_{k\ge 1} (1+x_i^k)^{a_{V}(k)}.$$ Now note that $\ol{p}_k$ represents the sum of all monomials corresponding to ways to assign a nonempty set of colors to a single weight $k$ vertex, so $1+\ol{p}_k$ represents the sum of monomials corresponding to all ways to assign a set of colors to the weight $k$ vertex, including the empty set. To choose such a coloring, we can independently choose whether or not to use each color. If color $i$ is used, then we get an $x_i^k$ in the monomial, and otherwise the power of $x_i$ is 0. Since we can independently choose an exponent of 0 or $k$ for each variable, $$1+\ol{p}_k = \prod_{i\ge 1}(1+x_i^k).$$ It follows that $$Y_{(G,\omega)} = \prod_{i\ge 1} I_{(G,\omega)}(x_i) = \prod_{i,k\ge 1}(1+x_i^k)^{a_{V}(k)} = \prod_{k\ge 1}(1+\ol{p}_k)^{a_{V}(k)},$$ as claimed.
\end{proof}

Theorem \ref{thm:p_expansion} then follows from the principle of inclusion-exclusion. The difference between $\ol{X}_{(G,\omega)}$ and $Y_{(G,\omega)}$ is that $\ol{X}_{(G,\omega)}$ requires all vertices to receive a nonempty set of colors. Thus, by inclusion-exclusion, we can compute $\ol{X}_{(G,\omega)}$ by adding the monomials all possible colorings without that restriction, then subtracting the monomials where a particular vertex receives no colors, then adding back the ones where a particular pair of vertices receive no colors, and so on. It follows that $$\ol{X}_{(G,\omega)} = \sum_{W\se V} Y_{(G,\omega)}.$$ Plugging in the formula from Lemma \ref{lem:Y_G} proves Theorem \ref{thm:p_expansion}.
\end{proof}

The formula in Corollary \ref{cor:p_coeff} is now fairly immediate:

\begin{proof}[Proof of Corollary \ref{cor:p_coeff}]
    Let $\lam = \ell^{i_\ell}\dots 2^{i_2}1^{i_1}.$ First, we note that $$\ol{p}_\lam = \ol{p}_{\lam_1}\dots \ol{p}_{\lam_{\ell(\lam)}} = \ol{p}_\ell^{i_\ell}\dots \ol{p}_2^{i_2}\ol{p}_1^{i_1}.$$ To see this, recall that $\ol{p}_\lam$ is the sum of all monomials corresponding to colorings of the vertices of the edgeless graph $\ol{K_\lam}$ with vertex weights $\lam_1,\lam_2,\dots,\lam_{\ell(\lam)}$, or equivalently, with $i_k$ vertices of weight $k$ for each $k=1,2,\dots,\ell.$ Since the graph has no edges, the set of colors assigned to a vertex has no bearing on how we can color the other vertices, so we can choose the coloring of each vertex independently. Thus, $\ol{p}_\lam$ is the product over all vertices of the sum of all possible monomials for ways to color that vertex, which is just $\ol{p}_k$ for a vertex of weight $k.$
    
    Thus, to get a $\ol{p}_\lam$ term from $\prod_{k\ge 1}(1+\ol{p}_k)^{a_W(k)},$ we need to choose a $\ol{p}_k^{i_k}$ term from $(1+\ol{p}_k)^{a_W(k)}$ for each $k=1,2,\dots,\ell.$ By the binomial theorem, $$(1+\ol{p}_k)^{a_W(k)} = \sum_{i\ge 0} \binom{a_W(k)}{i}\ol{p}_k^i,$$ so the coefficient of $\ol{p}_\lam$ in the product $\prod_{k\ge 1}(1+\ol{p}_k)^{a_W(k)}$ is just the product of these coefficients: $$[\ol{p}_\lam]\prod_{k\ge 1}(1+\ol{p}_k)^{a_W(k)} = \prod_{k=1}^\ell \binom{a_W(k)}{i_k}\ol{p}_k^i.$$ Summing over all $W\se V$ completes the proof.
\end{proof}

Next, we prove the recursive formula in Proposition \ref{prop:a_recurrence}:

\begin{proof}[Proof of Proposition \ref{prop:a_recurrence}]
    Taking logarithms of both sides of the equation $$(1+t)^{a_W(1)}(1+t^2)^{a_W(2)}(1+t^3)^{a_W(3)}\dots = I_{(G|_W,\omega)}(t)$$ gives $$\sum_{j\ge 1}a_W(j)\log(1+t^j) = \log(I_{G|_W,\omega}(t)).$$ Expanding the left side using the power series $\log(1+x) = \displaystyle{\sum_{i\ge 1} \frac{(-1)^{i+1}x^i}{i}},$ we get $$\sum_{j\ge 1}a_W(j)\sum_{i\ge 1}\frac{(-1)^{i+1}t^{ij}}{i} = \log(I_{G|_W,\omega}(t)).$$ Since $j=k$ is the largest value of $j$ on the left side that contributes a $t^k$ term, we can compute $a_W(k)$ recursively from the values $a_W(j)$ for $j < k$ by making the coefficient of $t^k$ on the left side match the coefficient $[t^k]\log(I_{G|_W,\omega}(t))$ on the right side. The values $j < k$ that contribute a $t^k$ term on the left are precisely the values where $j$ is some divisor $d$ of $k$, in which case $i = k/d.$ The contribution from the $j=d$ term is then $$a_W(d)\frac{(-1)^{k/d+1}}{k/d} = \frac{(-1)^{k/d}d\cdot a_W(d)}{k}.$$ Summing over all values $d\mid k$ and then subtracting the resulting sum from the right side coefficient $[t^k]\log(I_{G|_W,\omega}(t))$ gives the formula in Proposition \ref{prop:a_recurrence}.
\end{proof}

Now we prove the alternative recurrence from Proposition \ref{prop:second_recurrence}:

\begin{proof}[Proof of Proposition \ref{prop:second_recurrence}]
    Assume we know $a_W(1),\dots,a_W(k-1)$ and want to find $a_W(k).$ In the product $$\prod_{j\ge 1}(1+t^j)^{a_W(j)} = I_{(G|_W,\omega)}(t),$$ the coefficient $[t^k]$ of $t^k$ needs to be the same on both sides. On the right side, the coefficient is $[t^k]I_{(G|_W,\omega)}(t).$ On the left side, since $$(1+t^k)^{a_W(k)} + 1 + a_W(k)\cdot t^k + \dots,$$ the $(1+t^k)^{a_W(k)}$ factor contributes $a_W(k)$ to $[t^k].$ No factor $(1+t^j)^{a_W(j)}$ with $j > k$ can contribute to $[t^k].$ Each remaining contribution come from some partition $\lam = \ell^{i_\ell}\dots 2^{i_2}1^{i_1} \vdash k$ and corresponds to taking a $(t^j)^{i_j}$ from the $(1+t^j)^{a_W(j)}$ factor for each $j=1,2,\dots,\ell.$ By the binomial theorem, the coefficient of $(t^j)^{i_j}$ in $(1+t^j)^{a_W(j)}$ is $\binom{a_W(j)}{i_j},$ so the overall coefficient of $(t^\ell)^{i_\ell}\dots (t^2)^{i_2}(t^1)^{i_1}$ is $$\binom{a_W(\ell)}{i_\ell}\dots \binom{a_W(2)}{i_2}\binom{a_W(1)}{i_1}.$$ We then subtract all those coefficients from $[t^k]I_{(G|_W,\omega)}(t)$ to give the formula in Proposition \ref{prop:second_recurrence}.
\end{proof}

As noted in \S \ref{sec:intro}, Corollary \ref{cor:integrality} about the integrality of the $\ol{p}$-coefficients is an immediate consequence of Corollary \ref{cor:p_coeff} and Proposition \ref{prop:second_recurrence}, but we will now show that an alternative proof can also be given by considering the transition formula between the $\ol{p}$-basis and another basis defined in \cite{kromatic2023}: the \emph{\tb{\tcb{monomial symmetric function}}} $m_\lam$ is $$m_\lam := \sum_{n_1,\dots,n_k} x_{n_1}^{\lam_1} \dots x_{n_k}^{\lam_k},$$ where the sum is taken over all ordered tuples of $k$ distinct indices $n_1,\dots,n_k \in \Z_{>0}$. The \emph{\tb{\tcb{augmented monomial symmetric function}}} $\til{m}_\lam$ is $$\til{m}_\lam := i_1!\dots i_\ell!\cdot m_\lam,$$ where $i_j$ is the number of parts of $\lam$ of size $j.$ The \emph{\tb{\tcb{$K$-theoretic augmented monomial symmetric functions}}} $\m_\lam$ is $$\m_\lam := \ol{X}_{K_\lam},$$ where $K_\lam$ is a complete graph whose vertex weights are the parts of $\lam.$ A \emph{\tb{\tcb{stable set cover}}} $C = \{S_1,\dots,S_k\}$ is a set of $k$ independent sets in $G$ such that every vertex is in at least one of $S_1,\dots,S_k.$ We write $\textsf{SSC}(G)$ for the set of all stable set covers of $G$, and $\lam(C)$ for the partition whose parts are the weights $\omega(S_1),\dots,\omega(S_k),$ where the \emph{\tb{\tcb{weight}}} of a subset is the sum of the weights of its vertices, $\omega(S) := \sum_{v\in S} \omega(v).$ The authors of \cite{kromatic2023} prove the following $\m$-expansion formula for $\ol{X}_G$:

\begin{theorem}[{Crew, Pechenik, and Spirkl \cite[Proposition 3.4]{kromatic2023}}]
    The $\m$-expansion for $\ol{X}_G$ is $$\ol{X}_{(G,\omega)} = \sum_{C \in \textsf{\emph{SSC}}(G)} \m_{\lam(C)}.$$
\end{theorem}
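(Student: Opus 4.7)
The plan is to establish this theorem by a direct bijective reorganization of the defining sum for $\ol{X}_{(G,\omega)}$, without invoking the $\ol{p}$-expansion machinery developed earlier in this note.

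The starting observation is that a proper set coloring $\alpha$ carries two independent pieces of information. First, for each color $c \in \Z_{>0}$ the preimage $S_c := \{v \in V : c \in \alpha(v)\}$ is an independent set (by the disjointness of $\alpha$ on adjacent vertices), and the union of the nonempty $S_c$'s covers $V$ (since each $\alpha(v)$ is nonempty). Thus the collection $C(\alpha) := \{S_c : S_c \ne \varnothing\}$ is a stable set cover of $G$. Second, the data of which colors map to each element of $C(\alpha)$ must respect the condition that distinct elements receive disjoint fibers of colors.

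I would formalize this by showing that, for each fixed $C = \{S_1, \ldots, S_k\} \in \textsf{SSC}(G)$, the proper set colorings $\alpha$ with $C(\alpha) = C$ are in bijection with tuples $(A_1, \ldots, A_k)$ of pairwise disjoint nonempty subsets of $\Z_{>0}$, one indexed by each element of $C$, via the rule $\alpha(v) := \bigcup_{i : v \in S_i} A_i$. A direct rearrangement of products then identifies the monomial contributed by $\alpha$ as $\prod_{i=1}^k \prod_{c \in A_i} x_c^{\omega(S_i)}$. On the other hand, since $K_{\lam(C)}$ is a complete graph, its proper set colorings are exactly the assignments of pairwise disjoint nonempty color sets to its vertices, which carry weights $\omega(S_1), \ldots, \omega(S_k)$. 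Therefore the inner generating function over $(A_1, \ldots, A_k)$ equals $\ol{X}_{K_{\lam(C)}} = \m_{\lam(C)}$, and summing over all $C \in \textsf{SSC}(G)$ yields the stated identity.

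The only real obstacle is bookkeeping: one must verify that $C(\alpha)$ is genuinely a set (so that coincidences $S_c = S_{c'}$ for distinct colors $c, c'$ collapse to a single element of $C$, with both $c$ and $c'$ ending up in the same $A_i$), and that this is consistent with the requirement that the $A_i$'s be nonempty and pairwise disjoint. No deeper combinatorial input seems required; the theorem is essentially a refactoring of the defining generating function for $\ol{X}_{(G,\omega)}$ according to the induced stable set cover.
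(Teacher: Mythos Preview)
The paper does not actually prove this statement: it is quoted as Proposition~3.4 of \cite{kromatic2023} and invoked only as input to the alternative proof of Corollary~\ref{cor:integrality}. So there is no ``paper's own proof'' to compare against here.

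That said, your argument is correct and is essentially the standard one. The key points---that the color-fibers $S_c$ of a proper set coloring form a stable set cover, that collapsing equal fibers into a single element of $C$ is harmless because the corresponding colors simply merge into one $A_i$, and that the resulting data $(A_1,\dots,A_k)$ of pairwise disjoint nonempty color sets is exactly a proper set coloring of the complete graph $K_{\lam(C)}$---are all handled correctly. The monomial rearrangement $\prod_{v}\prod_{c\in\alpha(v)} x_c^{\omega(v)} = \prod_i \prod_{c\in A_i} x_c^{\omega(S_i)}$ is valid, and the possible non-uniqueness of matching the $S_i$'s to vertices of $K_{\lam(C)}$ when weights coincide is immaterial since one sums symmetrically over all tuples. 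No gap.
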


We can now use this to give our second proof of Corollary \ref{cor:integrality}:

\begin{proof}[Alternative proof of Corollary \ref{cor:integrality}]
    Consider the $\m$-expansion of $\ol{p}_\lam = \ol{X}_{\ol{K_\lam}}.$ Since $\ol{K_\lam}$ has no edges, all subsets of its vertices are independent sets. One possible stable set cover is the one where each vertex is in a stable set by itself, in which case $\lam(C) = \lam,$ so the expansion of $\ol{p}_\lam$ has an $\m_\lam$ term. For every other term $\m_\mu$, we must have either $|\mu| > |\lam|$ (if some vertices are covered by multiple stable sets in $C$) or else $\mu$ is formed by taking $\lam$ and combining some of its parts (if each vertex is only in one stable set but some stable sets contain multiple vertices), which is equivalent to saying that $\lam$ is a \emph{\tb{\tcb{refinement}}} of $\mu$. Thus, we have $[\m_\lam]\ol{p}_\lam=1$ and $[\m_\mu]\ol{p}_\mu=0$ whenever $|\mu|<|\lam|$ or $|\mu|=|\lam|$ but $\lam$ is not a refinement of $\mu.$ Now, order the set of all partitions in such a way that partitions of smaller numbers come first and $\lam$ comes before $\mu$ whenever $\lam$ is a refinement of $\mu$, which can be done by ordering the partitions of each size lexicographically. Then the transition matrix from the $\ol{p}$-basis to the $\m$-basis is upper triangular with 1's along the diagonal and all integer entries. It follows that the inverse matrix which transitions from the $\m$-basis to the $\ol{p}$-basis is also upper triangular with 1's on the diagonal and all integer entries. Thus, each $\m_\mu$ is a linear combination of $\ol{p}_\lam$ terms with integer coefficients. Since $\ol{X}_{(G,\omega)}$ is an integral linear combination of $\m_\mu$ terms for every weighted graph $(G,\omega)$, it follows that $\ol{X}_{(G,\omega)}$ is an integral linear combination of $\ol{p}_\lam$ terms, as claimed.
\end{proof}

\section*{Acknowledgements}

The author thanks Oliver Pechenik for suggesting the problem, and she thanks Oliver Pechenik, Karen Yeats, Sophie Spirkl, and the anonymous reviewers for providing helpful comments. She was partially supported by the Natural Sciences and Engineering Research Council of Canada (NSERC) grant RGPIN-2022-03093.

\printbibliography

\end{document}